\declaretheorem[name=Theorem, sibling=equation]{theorem}
\declaretheorem[name=Lemma, sibling=equation]{lemma}
\declaretheorem[name=Definition, style=definition, sibling=equation]{definition}
\declaretheorem[name=Notation, style=definition, sibling=equation]{notation}
\declaretheorem[name=Example, style=definition, sibling=equation]{example}
\newcommand{\N}{{\mathbb{N}}}
\title{A noetherian criterion for sequences of modules}
\author{Wee Liang Gan}
\address{}
\email{}
\author{Khoa Ta}
\address{}
\email{}
\begin{document}

\begin{abstract}
   We prove a noetherian criterion for a sequence of modules with linear maps between them. This generalizes a noetherian criterion of Gan and Li for infinite EI categories. We apply our criterion to the linear categories associated to certain diagram algebras defined by Patzt.
\end{abstract}

\maketitle

Gan and Li \cite{gl} proved a noetherian criterion for infinite EI categories. In this note, we adapt their proof to a more general setting and apply it to the linear categories studied by Patzt \cite{patzt} in the context of representation stability. In contrast to \cite{gl} and \cite{ss}, there is no combinatorial category in our setup. 

We work over a field $k$. Denote by $\N$ the set of nonnegative integers. Let $A$ be a sequence $\{A_i\}_{i\in \N}$ where $A_i$ is a $k$-algebra for each $i\in \N$.

\begin{definition}
   An \emph{$A$-module} $M$ is a sequence of pairs $\{(M_i, \phi^M_i)\}_{i\in \N}$ where, for each $i\in \N$,
   \begin{itemize}
      \item $M_i$ is an $A_i$-module;
      \item $\phi^M_i: M_i \to M_{i+1}$ is a $k$-linear map.
   \end{itemize}
\end{definition}

\begin{definition}
   Let $M$ and $N$ be $A$-modules. A \emph{morphism} $f: M\to N$ of $A$-modules is a sequence $\{f_i\}_{i\in \N}$ where, for each $i\in \N$, 
   \begin{itemize}
      \item $f_i: M_i\to N_i$ is a homomorphism of $A_i$-modules;
      \item $\phi^N_i \circ f_i = f_{i+1} \circ \phi^M_i$.
   \end{itemize}
\end{definition}

\begin{definition}
   Let $M$ be an $A$-module.
 \begin{enumerate}[(i)]
   \item An $A$-module $N$ is an \emph{$A$-submodule} of $M$ if each $N_i$ is a subset of $M_i$ and the sequence of inclusion maps $\{N_i\hookrightarrow M_i\}_{i\in \N}$ is a morphism of $A$-modules (called the \emph{inclusion morphism}).  
   
   \item Let $E\subseteq \bigsqcup_{i\in \N} M_i$. The $A$-submodule of $M$ \emph{generated by} $E$ is the smallest $A$-submodule $N$ of $M$ such that $E\subseteq \bigsqcup_{i\in \N} N_i$. 
 \end{enumerate}
\end{definition}

\begin{definition}
   \begin{enumerate}[(i)] 
      \item An $A$-module is \emph{finitely generated} if it can be generated by a finite set.

      \item An $A$-module is \emph{noetherian} if each of its $A$-submodules is finitely generated.
   \end{enumerate}
\end{definition}

\begin{notation}
   \begin{enumerate}[(i)]
      \item Denote by $\mathrm{Mod}(k)$ the category of $k$-modules. 
      \item Let $M$ be an $A$-module. Denote by $\mathrm{Sub}_A(M)$ the category whose objects are the $A$-submodules of $M$ and whose morphisms are inclusion morphisms between the $A$-submodules of $M$.
      For each $i\in \N$, denote by $F^M_i$ the functor
         \[ F^M_i : \mathrm{Sub}_A(M) \to \mathrm{Mod}(k), \quad N\mapsto \mathrm{Hom}_{A_i}(M_i, N_i). \] 
      (An inclusion morphism $N' \hookrightarrow N$ between $A$-submodules of $M$ induces an inclusion map $F^M_i(N') \hookrightarrow F^M_i(N)$.) In particular, 
      \[ F^M_i(M) = \mathrm{End}_{A_i}(M_i). \]
   \end{enumerate}
\end{notation}

\begin{lemma} \label{lemma.a}
   Let $M$ be an $A$-module, let $N$ be an $A$-submodule of $M$, and let $N'$ be an $A$-submodule of $N$. Let $i\in \N$ and assume that:
   \begin{itemize}
      \item $M_i$ is a semisimple $A_i$-module;
      \item $N'_i \subsetneqq N_i$.
   \end{itemize}
   Then $F^M_i(N') \subsetneqq F^M_i(N)$.
\end{lemma}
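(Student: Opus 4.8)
The plan is to use the semisimplicity of $M_i$ to produce a single $A_i$-module homomorphism $M_i \to N_i$ that witnesses the strict inclusion. First I would pin down how $F^M_i(N')$ sits inside $F^M_i(N)$. The inclusion morphism $N' \hookrightarrow N$ has $i$-th component the $A_i$-linear inclusion $N'_i \hookrightarrow N_i$, and $F^M_i$ applied to it is the map $\mathrm{Hom}_{A_i}(M_i, N'_i) \to \mathrm{Hom}_{A_i}(M_i, N_i)$ sending $g \colon M_i \to N'_i$ to the composite of $g$ with $N'_i \hookrightarrow N_i$. Since $N'_i \hookrightarrow N_i$ is injective, this map is injective, and its image is precisely the set of $A_i$-linear maps $M_i \to N_i$ whose image lands in $N'_i$. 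So it suffices to exhibit an element of $\mathrm{Hom}_{A_i}(M_i, N_i)$ whose image is \emph{not} contained in $N'_i$.

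Next, since $M_i$ is a semisimple $A_i$-module and $N_i$ is an $A_i$-submodule of $M_i$, the submodule $N_i$ is a direct summand, say $M_i = N_i \oplus P$ as $A_i$-modules. Let $\pi \colon M_i \to N_i$ be the associated projection; it is $A_i$-linear and surjective, so its image is all of $N_i$. Because $N'_i \subsetneqq N_i$, the image of $\pi$ is not contained in $N'_i$, so $\pi$ lies in $F^M_i(N)$ but not in the image of $F^M_i(N')$. Combined with the previous paragraph, this gives $F^M_i(N') \subsetneqq F^M_i(N)$.

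I do not expect any real obstacle: the argument is entirely at level $i$, and the transition maps $\phi^M$, $\phi^N$ and the levels $j \neq i$ play no role in the statement. The only points requiring a little care are the identification of $F^M_i(N')$ with the set of homomorphisms $M_i \to N_i$ factoring through $N'_i$ (which also re-proves the parenthetical assertion in the Notation that the induced map is an inclusion), and invoking the standard fact that a submodule of a semisimple module is a direct summand.
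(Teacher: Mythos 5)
Your proposal is correct and follows essentially the same argument as the paper: use semisimplicity of $M_i$ to write $M_i = N_i \oplus P$, and observe that the projection onto $N_i$ is an element of $F^M_i(N)$ that cannot lie in $F^M_i(N')$ because its image is all of $N_i$ and $N'_i \subsetneqq N_i$. The extra care you take in identifying $F^M_i(N')$ with the maps factoring through $N'_i$ is implicit in the paper's phrasing but adds nothing substantively different.
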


\begin{proof}
   Since $M_i$ is a semisimple $A_i$-module, there exists an $A_i$-submodule $Q$ of $M_i$ such that $M_i=N_i \oplus Q$.
   Let $h$ be the projection map from $M_i$ onto $N_i$ along $Q$. Then $h$ belongs to $F^M_i(N)$. Since $N'_i \neq N_i$, we know that $h$ does not factor through the inclusion map $N'_i\hookrightarrow N_i$, hence $h$ does not belong to $F^M_i(N')$.
\end{proof}

Our noetherian criterion is:

\begin{theorem} \label{thm.a}
   Let $M$ be an $A$-module such that each $M_i$ is finite dimensional. Let $d\in \N$. Assume that for each integer $i\geqslant d$,
   \begin{enumerate}
      \item $M_i$ is a semisimple $A_i$-module;
      \item there exists a morphism of functors $\nu_i: F^M_i \to F^M_{i+1}$ such that 
         \[ \nu_i(M): F^M_i (M) \to F^M_{i+1}(M)\]  
      is a bijection. 
   \end{enumerate} 
   Then $M$ is noetherian.
\end{theorem}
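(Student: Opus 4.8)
The plan is to show that $\mathrm{Sub}_A(M)$ satisfies the ascending chain condition, which is equivalent (by the usual argument) to every $A$-submodule of $M$ being finitely generated. So let $N^{(1)} \subseteq N^{(2)} \subseteq \cdots$ be an ascending chain of $A$-submodules of $M$; the goal is to produce an index $j_0$ with $N^{(j)} = N^{(j_0)}$ for all $j \geq j_0$. Since an $A$-submodule of $M$ is determined by its sequence of components, it suffices to find, for each $i \in \N$, an index beyond which the chain $\{N^{(j)}_i\}_j$ is constant, and then to bound these indices by a single $j_0$.

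For the finitely many indices $i < d$ this is immediate: each $M_i$ is a finite-dimensional $A_i$-module, hence noetherian as an $A_i$-module, so $\{N^{(j)}_i\}_j$ stabilizes, and taking the maximum over $i \in \{0,\dots,d-1\}$ handles all of them at once. The work is in the tail $i \geq d$, where infinitely many indices must be controlled simultaneously, and this is where hypotheses (1) and (2) enter. Put $V = F^M_d(M) = \mathrm{End}_{A_d}(M_d)$, a finite-dimensional $k$-vector space, and define bijections $G_i : V \to F^M_i(M)$ for $i \geq d$ by $G_d = \mathrm{id}_V$ and $G_{i+1} = \nu_i(M) \circ G_i$; each $G_i$ is a composite of bijections by (2). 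For an $A$-submodule $N$ of $M$ and $i \geq d$, set $H_i(N) = G_i^{-1}\bigl(F^M_i(N)\bigr) \subseteq V$, using that $F^M_i(N) = \mathrm{Hom}_{A_i}(M_i,N_i)$ sits inside $F^M_i(M)$.

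Two facts drive the tail argument. First, naturality of $\nu_i$ on the inclusion $N \hookrightarrow M$ shows that $\nu_i(M)$ carries the subspace $F^M_i(N)$ into $F^M_{i+1}(N)$; feeding this through the definitions gives $H_i(N) \subseteq H_{i+1}(N)$, so $H_\bullet(N)$ is an ascending chain of subspaces of the finite-dimensional space $V$ and is eventually equal to $W(N) := \bigcup_{i \geq d} H_i(N)$. Second, since $G_i$ is a bijection, $H_i(N') = H_i(N)$ iff $F^M_i(N') = F^M_i(N)$, and when $N' \subseteq N$ and $i \geq d$, Lemma \ref{lemma.a} (using semisimplicity of $M_i$) turns this into $N'_i = N_i$. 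Now $W(N^{(1)}) \subseteq W(N^{(2)}) \subseteq \cdots$ is an ascending chain of subspaces of $V$, so it stabilizes at some $j^*$; fix $i^* \geq d$ with $H_{i^*}(N^{(j^*)}) = W(N^{(j^*)})$. Then for all $j \geq j^*$ and $i \geq i^*$,
\[ W(N^{(j^*)}) = H_{i^*}(N^{(j^*)}) \subseteq H_i(N^{(j^*)}) \subseteq H_i(N^{(j)}) \subseteq W(N^{(j)}) = W(N^{(j^*)}), \]
forcing $H_i(N^{(j)}) = H_i(N^{(j^*)})$, hence $F^M_i(N^{(j)}) = F^M_i(N^{(j^*)})$, hence $N^{(j)}_i = N^{(j^*)}_i$ by Lemma \ref{lemma.a}. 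This settles all $i \geq i^*$ uniformly in $j$; the finitely many middle indices $d \leq i < i^*$ are settled as in the head case. Taking $j_0$ to be the maximum of the indices obtained from the three ranges $i < d$, $d \leq i < i^*$, and $i \geq i^*$ yields $N^{(j)} = N^{(j_0)}$ for all $j \geq j_0$.

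The main obstacle is precisely the tail: a purely componentwise argument stabilizes each $\{N^{(j)}_i\}_j$ but gives no control on how the threshold grows with $i$. The device that defeats this is transporting all the subspaces $F^M_i(N)$ into the single finite-dimensional space $V$ via the bijections $G_i$ supplied by hypothesis (2), so that monotonicity in both $i$ and $j$ happens inside one noetherian object; hypothesis (1), through Lemma \ref{lemma.a}, is what lets one recover equality of the $N_i$ from equality of these subspaces. The remaining points to verify — the two naturality/compatibility claims about $\nu_i$ and the fact that an $A$-submodule is determined by its components — are routine.
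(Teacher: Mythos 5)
Your proof is correct. It runs on the same two engines as the paper's argument --- Lemma \ref{lemma.a} to convert (non-)equality of components $N_i$ into (non-)equality of the subspaces $F^M_i(N)\subseteq F^M_i(M)$, and naturality plus bijectivity of the $\nu_i(M)$ to compare these subspaces across different indices $i$ --- but the logical organization is genuinely different. The paper argues by contradiction: it takes a non-finitely-generated submodule $N$, forms the truncations generated by $\bigsqcup_{i=0}^{\ell} N_i$, extracts a diagonal subsequence of indices along which the components strictly grow, and produces a strictly increasing sequence of dimensions bounded above by $\dim \mathrm{End}_{A_d}(M_d)$. You instead prove the ascending chain condition directly for an arbitrary chain, by transporting every $F^M_i(N^{(j)})$ into the single finite-dimensional space $V=F^M_d(M)$ via the bijections $G_i$ and stabilizing the resulting doubly-monotone family of subspaces first in $j$ (through the unions $W(N^{(j)})$) and then in $i$. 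What your version buys is a uniform stabilization index for the entire chain and the avoidance of both the contradiction and the subsequence extraction; what it costs is the preliminary reduction of ``every submodule is finitely generated'' to ACC, which you correctly flag as routine (it uses that arbitrary intersections and unions of ascending chains of $A$-submodules are again $A$-submodules). The paper's route is shorter because the truncation chain of a non-finitely-generated submodule comes with built-in strict growth, so no separate stabilization bookkeeping is needed.
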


\begin{proof}
   We adapt the proof of \cite{gl}*{Proposition 4.10} to our setting. 
   
   Assume on the contrary that there exists an $A$-submodule $N$ of $M$ which is not finitely generated. For each $\ell \in \N$, let $N^{(\ell)}$ be the $A$-submodule of $N$ generated by $\bigsqcup_{i=0}^\ell N_i$. Observe that if $i\in\{0,\ldots, \ell\}$, then $N^{(\ell)}_i = N_i$.
   
   Since each $N_i$ is finite dimensional, the $A$-module $N^{(\ell)}$ is finitely generated; but $N$ is not finitely generated, so $N^{(\ell)}\subsetneqq N$. It follows that there exists an integer $d_\ell > \ell$ such that $N^{(\ell)}_{d_\ell} \subsetneqq N_{d_\ell}$. We have $N^{(\ell)}_{d_\ell} \subsetneqq N^{(d_\ell)}_{d_\ell}$.

   Define a sequence of integers $\{\ell_i\}_{i\in \N}$ recursively by $\ell_0 = d$ and
   \[ \ell_{i+1} = d_{\ell_i} \quad \mbox{ for each } i\in \N. \]
   We have $d=\ell_0 < \ell_1 < \ell_2 < \cdots$. For each $i\in \N$,
   \[ N^{(\ell_i)}_{\ell_{i+1}} \subsetneqq N^{(\ell_{i+1})}_{\ell_{i+1}}. \]
   Hence, by Lemma \ref{lemma.a} and (\ref{thm.a}.1),
   \[ F^M_{\ell_{i+1}} (N^{(\ell_i)}) \subsetneqq F^M_{\ell_{i+1}} (N^{(\ell_{i+1})}). \]
   We know that $F^M_{\ell_{i+1}} (N^{(\ell_{i+1})})$ is finite dimensional because $M_{\ell_{i+1}}$ and $N^{(\ell_{i+1})}_{\ell_{i+1}}$ are finite dimensional. It follows that
   \begin{equation} \label{eqn.a}
      \dim F^M_{\ell_{i+1}} (N^{(\ell_i)}) < \dim F^M_{\ell_{i+1}} (N^{(\ell_{i+1})}). 
   \end{equation}

   By (\ref{thm.a}.2), we also have, for each $i\in \N$, a commuting diagram
   \[ \xymatrixcolsep{2cm}\xymatrix{  
      F^M_d(N^{(\ell_i)}) \ar[r]^-{\nu_d(N^{(\ell_i)})} \ar@{^{(}->}[d] & F^M_{d+1}(N^{(\ell_i)}) \ar[r]^-{\nu_{d+1}(N^{(\ell_i)})}  \ar@{^{(}->}[d] & F^M_{d+2}(N^{(\ell_i)}) \ar[r]^-{\nu_{d+2}(N^{(\ell_i)})} \ar@{^{(}->}[d] & \cdots \\
      F^M_d(M) \ar[r]^-{\nu_d(M)} & F^M_{d+1}(M) \ar[r]^-{\nu_{d+1}(M)} & F^M_{d+2}(M) \ar[r]^-{\nu_{d+2}(M)} & \cdots \\
     } \]   
   In the above diagram, the maps in the second row are bijective, hence the maps in the first row are injective. We deduce that, for each $i\in \N$,
   \begin{equation} \label{eqn.b}
      \dim F^M_{\ell_i}(N^{(\ell_i)}) \leqslant \dim F^M_{\ell_{i+1}}(N^{(\ell_i)})
   \end{equation}
   and 
   \begin{equation} \label{eqn.c}
      \dim F^M_{\ell_i}(N^{(\ell_i)}) \leqslant \dim F^M_d (M).
   \end{equation}

   From (\ref{eqn.a}) and (\ref{eqn.b}), we obtain, for each $i\in \N$,
   \[ \dim F^M_{\ell_i}(N^{(\ell_i)}) < \dim F^M_{\ell_{i+1}} (N^{(\ell_{i+1})}). \]
   Hence,
   \[ \dim F^M_{\ell_0}(N^{(\ell_0)}) < \dim F^M_{\ell_1}(N^{(\ell_1)}) < \dim F^M_{\ell_2}(N^{(\ell_2)}) < \cdots. \]
   This is a contradiction because by (\ref{eqn.c}) these dimensions are at most $\dim F^M_d(M)$, but $\dim M_d < \infty$ and $F^M_d(M)=\mathrm{End}_{A_d}(M_d)$ imply that $\dim F^M_d(M)<\infty$.
\end{proof}

\begin{example} \label{example.a}
   Let $k=\mathbb{C}$. 
   Let $\{A_i\}_{i\in\N}$ be one of the following three sequences of $\mathbb{C}$-algebras associated to a parameter $\delta\in \mathbb{C}$ (see \cite{patzt} for definitions):
   \begin{itemize}
      \item the sequence $\{\mathrm{TL}_i\}_{i\in\N}$ of Temperley-Lieb algebras,
      \item the sequence $\{\mathrm{Br}_i\}_{i\in\N}$ of Brauer algebras,
      \item the sequence $\{\mathrm{P}_i\}_{i\in\N}$ of partition algebras.
   \end{itemize}   
   Assume that $\delta$ is chosen so that each $A_i$ is a semisimple $\mathbb{C}$-algebra. Let $A$ be the sequence $\{A_i\}_{i\in\N}$.

   Let $C_A$ be the linear category (called stability category) defined in \cite{patzt}*{page 635}. The set of objects of $C_A$ is $\N$. For any $i,j\in \N$, if $i\leqslant j$, then 
   \[ C_A(i,j)= A_j \otimes_{A_{j-i}}\mathbb{C}; \]
   otherwise if $i>j$, then $C_A(i,j)=0$. For each $m\in\N$, the functor $C_A(m, -)$ is a $C_A$-module denoted by $M(m)$ (see \cite{patzt}*{page 643}.
   
   We claim that every finitely generated $C_A$-module is noetherian\footnote{It follows that Theorems A, B, C of \cite{patzt} hold for any finitely generated $C_A$-module. 
   }. Let $m\in \N$. It suffices\footnote{Every finitely generated $C_A$-module $V$ is a homomorphic image of $M(m_1)\oplus \cdots\oplus M(m_r)$ for some $m_1, \ldots, m_r\in \N$. If $M(m_1), \ldots, M(m_r)$ are noetherian, then their direct sum is noetherian, which implies that $V$ is noetherian.} to prove that the $C_A$-module $M(m)$ is noetherian. To this end, define an $A$-module $M$ as follows: for each $i\in\N$, let $M_i = M(m)_i$ and let $\phi^M_i : M_i \to M_{i+1}$ be the map induced by the element $1\otimes 1\in A_{i+1}\otimes_{A_1} \mathbb{C}$ (see \cite{patzt}*{Lemma 2.14}). Then each $M_i$ is a finite dimensional semisimple $A_i$-module. To apply Theorem \ref{thm.a}, it remains to verify that there exists $d\in\N$ such that for each $i\geqslant d$, condition (\ref{thm.a}.2) holds.

   For each integer $i\geqslant m$, define a functor $F'_i$ by 
   \[ F'_i : \mathrm{Sub}_A(M) \to \mathrm{Mod}(\mathbb{C}), \quad N \mapsto \mathbb{C}\otimes_{A_{i-m}} N_i. \]

   \medskip 
   
   \noindent{\it Claim \ref{example.a}.1.} For each integer $i\geqslant m$, the functors $F^M_i$ and $F'_i$ are isomorphic.
   
   \begin{proof}
   We have:
   \begin{align*}
      F^M_i (N) &= \mathrm{Hom}_{A_i} (A_i\otimes_{A_{i-m}} \mathbb{C}, N_i) \\
      &\cong \mathrm{Hom}_{A_{i-m}} (\mathbb{C}, N_i).
   \end{align*}
   Since $N_i$ is a semisimple $A_{i-m}$-module, it is a direct sum of its isotypic components. Let $N_i^{\mathrm{triv}}$ be the isotypic component of $N_i$ spanned by the submodules isomorphic to the trivial $A_{i-m}$-module $\mathbb{C}$.  Then we have:
   \begin{align*}
      F^M_i(N) &\cong N_i^{\mathrm{triv}} \\
      &\cong \mathbb{C}\otimes_{A_{i-m}}  N_i \quad \mbox{(using \cite{patzt}*{Lemma 4.5})}\\
      &= F'_i(N). 
   \end{align*}
   \end{proof} 

   For each integer $i\geqslant m$ and $A$-submodule $N$ of $M$, the map $\phi^N_i: N_i \to N_{i+1}$ induces a map 
   \[ \nu'_i(N) : \mathbb{C}\otimes_{A_{i-m}} N_i \to \mathbb{C}\otimes_{A_{i+1-m}} N_{i+1}. \]
   This defines a functor $\nu'_i : F'_i \to F'_{i+1}$. By \cite{patzt}*{Theorem 4.3}, for all $i$ sufficiently large, the map 
   \[ \nu'_i(M): F'_i(M) \to F'_{i+1}(M) \]  
   is a bijection and hence by Claim \ref{example.a}.1, condition (\ref{thm.a}.2) holds. We conclude by Theorem \ref{thm.a} that the $A$-module $M$ (and hence the $C_A$-module $M(m)$) is noetherian. 
\end{example}

\begin{example} \label{example.b}
   We show how to apply Theorem \ref{thm.a} to the setting in \cite{gl}. Let $k$ be a field of characteristic $0$ and $C$ the category in \cite{gl}*{Theorem 3.7}. 

   By assumption, $C$ is a category whose set of objects is $\N$, and for all $i,j, \ell\in \N$:
   \begin{itemize}
      \item $C(i,j)$ is a finite set;
      \item $C(i,j)$ is nonempty if and only if $i\leqslant j$;
      \item if $i<j<\ell$, then the composition map 
      \[ C(i,j)\times C(j,\ell) \to C(i,\ell), \quad (\alpha, \beta) \mapsto \beta\alpha \]
      is surjective;
      \item every morphism in $C(i,i)$ is an isomorphism, so that $C(i,i)$ is a group which we denote by $G_i$;
      \item (\emph{transitivity condition}) the action of $G_{i+1}$ on $C(i,i+1)$ is transitive.
   \end{itemize}
   Moreover, $C$ is assumed to satisfy a bijectivity condition. To state this condition, we need some notations. 

   For each $i\in \N$, fix a morphism $\alpha_i\in C(i,i+1)$. Then whenever $i\leqslant j$, we have the morphism $\alpha_{j-1}\cdots\alpha_{i+1}\alpha_i\in C(i,j)$. Let $H_{i,j}$ be the stabilizer of $\alpha_{j-1}\cdots\alpha_{i+1}\alpha_i$ in the group $G_j$. (In particular, $H_{i,i}$ is the trivial subgroup of $G_i$.)
   
   \medskip

   \noindent{\it Claim \ref{example.b}.1.} For each $h\in H_{i,j}$, there exists $g\in H_{i,j+1}$ such that $\alpha_j h = g\alpha_j$.

   \begin{proof} 
      By the transitivity condition, for each $h\in H_{i,j}$, there exists $g\in G_{j+1}$ such that $\alpha_j h = g\alpha_j$. Observe that 
      \[ g\alpha_j \alpha_{j-1}\cdots \alpha_i = \alpha_j h \alpha_{j-1} \cdots \alpha_i =  \alpha_j \alpha_{j-1} \cdots \alpha_i. \]
      Thus $g\in H_{i,j+1}$.
   \end{proof}

   Denote by $H_{i,j}\backslash C(i,j)$ the set of $H_{i,j}$-orbits in $C(i,j)$. It follows from Claim \ref{example.b}.1 that the map 
   \[ C(i,j) \to C(i, j+1), \quad \beta \mapsto \alpha_j \beta \]
   sends each $H_{i,j}$-orbit into a $H_{i,j+1}$-orbit. Let
   \[ \mu_{i,j}: H_{i,j}\backslash C(i,j) \to H_{i,j+1}\backslash C(i,j+1) \]
   be the map such that for each $\beta\in C(i,j)$, if $O$ is the $H_{i,j}$-orbit containing $\beta$, then $\mu_{i,j}(O)$ is the $H_{i,j+1}$-orbit containing $\alpha_j\beta$. The \emph{bijectivity condition} states that: for each $i\in \N$, the map $\mu_{i,j}$ is bijective for all $j$ sufficiently large.

   Now for each $i\in\N$, let $A_i$ be the group algebra $kG_i$. Let $A$ be the sequence $\{A_i\}_{i\in \N}$.
   
   Denote by $kC(i,j)$ the vector space over $k$ with basis $C(i,j)$. For each $m\in\N$, the functor $kC(m, -)$ is a $C$-module over $k$ denoted by $M(m)$. We wish to deduce that $M(m)$ is a noetherian $C$-module from Theorem \ref{thm.a}. To this end, define an $A$-module $M$ as follows: for each $i\in\N$, let $M_i = M(m)_i$ and let $\phi^M_i : M_i \to M_{i+1}$ be the map induced by $\alpha_i$. 
   
   By Maschke's theorem, we know that $M_i$ is a semisimple $A_i$-module. We need to show that there exists $d\in\N$ such that for each $i\geqslant d$, condition (\ref{thm.a}.2) holds.

   For each integer $i\geqslant m$, define a functor $F'_i$ by 
   \[ F'_i : \mathrm{Sub}_A(M) \to \mathrm{Mod}(k), \quad N \mapsto k\otimes_{H_{m,i}} N_i. \]
   Similarly to Claim \ref{example.a}.1, the functors $F^M_i$ and $F'_i$ are isomorphic. 
   
   Now consider any $A$-submodule $N$ of $M$. Using Claim \ref{example.b}.1, it is easy to see that the map $\phi^N_i: N_i \to N_{i+1}$ induces a map 
   \[ \nu'_i(N) : k\otimes_{H_{m,i}} N_i \to k\otimes_{H_{m,i+1}} N_{i+1}. \]
   Thus we obtain a functor $\nu'_i : F'_i \to F'_{i+1}$. We have 
   \begin{align*}
      F'_i(M) &= k\otimes_{H_{m,i}} M_i \\
      &= k\otimes_{H_{m,i}} kC(m,i).
   \end{align*}
   It follows from the bijectivity condition on $C$ that for all $i$ sufficiently large, the map 
   \[ \nu'_i(M): F'_i(M) \to F'_{i+1}(M) \]  
   is a bijection and hence condition (\ref{thm.a}.2) holds. 
\end{example}

\begin{bibdiv}
\begin{biblist}
 
\bib{gl}{article}{
   author={Gan, Wee Liang},
   author={Li, Liping},
   title={Noetherian property of infinite EI categories},
   journal={New York J. Math.},
   volume={21},
   date={2015},
   pages={369--382},
}

\bib{patzt}{article}{
   author={Patzt, Peter},
   title={Representation stability for diagram algebras},
   journal={J. Algebra},
   volume={638},
   date={2024},
   pages={625--669},
}

\bib{ss}{article}{
   author={Sam, Steven V.},
   author={Snowden, Andrew},
   title={Gr\"obner methods for representations of combinatorial categories},
   journal={J. Amer. Math. Soc.},
   volume={30},
   date={2017},
   number={1},
   pages={159--203},
}

\end{biblist}
\end{bibdiv}

\end{document}